\newtheorem{proposition}{Proposition}
\newtheorem{theorem}{Theorem}
\newcommand{\integers}{{\mathbb Z}}
\newcommand{\realnos}{{\mathbb R}}
\begin{document}

\title{Hyperbolic 24-Cell 4-Manifolds with One Cusp}

\author{John G. Ratcliffe and Steven T. Tschantz}

\address{Department of Mathematics, Vanderbilt University, Nashville, TN 37240
\vspace{.1in}}

\email{j.g.ratcliffe@vanderbilt.edu}

\date{}

\begin{abstract}
In this paper, we describe all the hyperbolic 24-cell 4-manifolds with exactly one cusp. 
There are four of these manifolds up to isometry. 
These manifolds are the first examples of one-cusped hyperbolic 4-manifolds of minimum volume. 
\end{abstract}

\maketitle

\section{Introduction}\label{S:1} %1
The 24-cell is a regular 4-dimensional polytope in either Euclidean, Spherical or hyperbolic 4-space 
with exactly 24 sides each of which is a regular octahedron. 
A {\it hyperbolic 24-cell manifold} is a hyperbolic 4-manifold that is obtained 
from an ideal, regular, hyperbolic 24-cell by gluing each side to another side by an isometry. 
Examples of hyperbolic 24-cell manifolds are given in our paper \cite{R-T-V}.
Hyperbolic 24-cell manifolds have minimum volume among hyperbolic 4-manifolds.  
In this paper, all hyperbolic manifolds are assumed to be complete. 
As a reference for hyperbolic manifolds, see \cite{R}.

The main result of this paper is the following 
classification of all one-cusped hyperbolic 24-cell manifolds: 

\begin{theorem}\label{T1}
There are exactly four hyperbolic 24-cell manifolds, with a single cusp, up to isometry. 
Each of these manifolds is non-orientable. The link of each cusp is affinely equivalent to the second non-orientable 
closed flat 3-manifold $N^3_2$ in the Hantzsche-Wendt \cite{H-W} classification of closed flat 3-manifolds. 
\end{theorem}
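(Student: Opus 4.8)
The plan is a computer-assisted enumeration of the side-pairings of the ideal regular hyperbolic $24$-cell $X$, followed by an analysis of the resulting cusp cross-sections. I begin by fixing a combinatorial model of $X$: it has $24$ ideal vertices, $96$ edges, $96$ triangular $2$-faces and $24$ octahedral facets; its symmetry group $G$ is the Coxeter group of type $F_4$, of order $1152 = 24\cdot 48$, acting transitively on the facets with facet-stabilizer the order-$48$ symmetry group of the regular octahedron; and the vertex figure at each ideal vertex is a Euclidean cube. A side-pairing of $X$ matches the $24$ facets into $12$ pairs $\{S,S'\}$ and assigns to each pair a combinatorial isometry $g_S\colon S\to S'$ with $g_{S'}=g_S^{-1}$; a facet cannot be paired with itself in a manifold, since the gluing map of a self-paired facet would fix its centroid. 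There are only finitely many side-pairings. By the Poincar\'e fundamental polyhedron theorem (see \cite{R}), the quotient $M=X/{\sim}$ is a complete hyperbolic $4$-manifold exactly when the triangular $2$-faces are matched into cycles around each of which the dihedral angles sum to $2\pi$, the link of each edge orbit is a $2$-sphere, and the link of each ideal-vertex orbit is a closed flat $3$-manifold, the last being assembled from the cubical vertex figures along the gluing. The single-cusp hypothesis is exactly the requirement that the $24$ ideal vertices form a single orbit, so that the cusp cross-section $N$ is one closed flat $3$-manifold tiled by $24$ Euclidean cubes.

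Two side-pairings yield isometric $24$-cell manifolds if and only if they lie in a common $G$-orbit: the $24$-cell decomposition of a $24$-cell manifold is its Epstein--Penner canonical decomposition (with a single cusp, the maximal-horoball cross-section is canonical and hence preserved by every isometry), so by Mostow rigidity every isometry is induced by an element of $G$. Thus ``up to isometry'' reduces to a finite check, and the classification becomes a finite search. I would carry it out as a backtracking search over side-pairings modulo $G$, building a side-pairing one facet at a time: $G$-symmetry pins down the first gluing; each new partial gluing propagates the forced identifications of $2$-faces, edges and vertices; and a branch is pruned as soon as a $2$-face cycle closes with the wrong angle sum, an edge cycle closes to a surface other than $S^2$, the partial link around a cubical vertex figure acquires a non-Euclidean singularity, or the vertex identifications can no longer be completed to a single orbit. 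Run to completion, the search produces finitely many one-cusped $24$-cell manifolds, and discarding $G$-duplicates leaves exactly four.

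It remains to analyze the four cusps. For each of the four manifolds, the gluing of the $24$ cubical vertex figures determines the closed flat $3$-manifold $N$ together with its holonomy, i.e.\ the Bieberbach group $\pi_1(N)\subset\mathrm{Isom}(\mathbb{E}^3)$; comparing its point group and translation lattice against the Hantzsche--Wendt classification \cite{H-W} identifies $N$ in every case as the second non-orientable closed flat $3$-manifold $N^3_2$, and records the affine type, so that the four cusp shapes are all affinely equivalent to $N^3_2$ although not necessarily similar to one another. Since $N^3_2$ is non-orientable and a collar $N\times\mathbb{R}$ of the cusp embeds in $M$, each of the four manifolds is non-orientable. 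This would complete the proof.

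The principal obstacle is the size of the search space: the number of raw side-pairings of the $24$-cell is on the order of $23!!\cdot 48^{12}$, which remains far too large after dividing by $|G|=1152$ for a naive enumeration, so the argument depends on organizing the backtracking with pruning strong enough --- from the angle condition, the edge- and vertex-link conditions and the single-cusp requirement --- that an exhaustive search terminates in practice, while guaranteeing that no admissible side-pairing is silently discarded. A secondary technical point is the justification that the $24$-cell decomposition is the canonical one, which is what makes the final deduplication up to isometry both valid and computable.
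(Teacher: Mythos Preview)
Your proposal is correct and follows essentially the same approach as the paper: an exhaustive backtracking search over side-pairings of the ideal regular 24-cell modulo its symmetry group, with the Epstein--Penner canonical decomposition used to reduce the isometry classification to $G$-orbits, and the cusp link identified from the glued vertex-figure cubes. The only minor differences are that the paper first enumerates all $13{,}108$ manifold side-pairings (up to symmetry) before filtering for one cusp, and then distinguishes the four survivors via their homology groups rather than relying solely on the $G$-orbit argument; your direct $G$-orbit distinction is equally valid once the Epstein--Penner fact is established.
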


The volume of a hyperbolic 4-manifold $M$ of finite volume is given by the formula 
$$\mathrm{Vol}(M) = \textstyle{\frac{4}{3}}\pi^2 \chi(M).$$ 
Our examples are the first known examples of one-cusped hyperbolic 4-manifolds 
of Euler characteristic 1, and therefore of minimum volume.

The first examples of one-cusped hyperbolic 4-manifold were constructed by Kolpakov and Martelli \cite{K-M}, 
and in particular, they found an example with $\chi = 4$.  Examples of one-cusped hyperbolic 4-manifolds with $\chi = 2$ were constructed by Slavich and Kolpakov \cite{S, K-S-S, K-S-C}. 
The existence of our examples answers Question 4.17 of \cite{K-S-S} in the affirmative. 

Our paper is organized as follows:  In \S2, the flat manifold $N^3_2$ is described. 
In \S3, the classification of the one-cusped hyperbolic 24-cell manifolds is described. 
In \S4, we discuss how to obtain a presentation for the fundamental group of a hyperbolic 24-cell manifold. 
In \S5, \ldots, \S8, we give a presentation for the fundamental group of each of the one-cusped hyperbolic 24-cell manifolds. 
In \S9, we determine the volume of the maximum cusp of each of the one-cusped hyperbolic 24-cell manifolds. 
In \S10, we determine the order of the group of isometries of each of the one-cusped hyperbolic 24-cell manifolds. 
In \S11, we describe the orientable double covers of each of the one-cusped hyperbolic 24-cell manifolds. 

%%%%%%%%%%%%%%%%%%%%%%%%%%%%%%
\section{The Closed Flat 3-Manifold $N^3_2$} % 2

The non-orientable closed flat 3-manifolds are classified up to affine equivalence by their first homology group, with $H_1(N^3_2) = \integers \oplus \integers$.  The flat 3-manifold $N^3_2$ is the Euclidean space-form $E^3/\Gamma$ 
where $\Gamma$ is the crystallographic group with International Tables Number 9. 
For the standard affine representation of $\Gamma$, see Table 1B of \cite{B-Z}. 
The flat 3-manifold $N^3_2$ fibers in various ways (see Table 1 of \cite{R-T-F}). 
In particular $N^3_2$ is a torus bundle over the circle with monodromy 
the orientation-reversing isometry of the flat torus $E^2/\integers^2$ induced by 
the reflection of $E^2$ defined by the matrix
$$\left(\begin{array}{rr} 0 & 1 \\ 1 & 0 \end{array}\right).$$

%%%%%%%%%%%%%%%%%%%%%%%%%%%%%%%%%%%
\section{Classification of the One-Cusped Hyperbolic 24-Cell Manifolds} % 3

To prove Theorem \ref{T1}, we first found all the possible side-pairings of an ideal, regular, hyperbolic 24-cell 
that yield a hyperbolic 4-manifold by an exhaustive computer search over a search space of order $(23!!) 48^{12}$. 
Our search was successful because the ridge and edge cycle conditions, defining when a side-pairing gives a manifold, 
each depend on only a few of the side-pairing maps.  A backtracking search in the space of partial side-pairings that incrementally checks these conditions can eliminate potential side-pairings early on.  
The next side to extend a partial side-pairing is chosen strategically
to maximize the conditions that can be checked thus limiting the number of search tree branches that need to be examined.
We further took advantage of the symmetry of the regular 24-cell. 

Up to symmetry of the regular 24-cell, we found 13,108 side-pairings that yield 
a hyperbolic 4-manifold. Only four of these side-pairings yield a manifold with a single cusp. 
We classified these one-cusped manifolds up to isometry by computing their homology groups 
using a cellular homology chain complex. 
The one-cusped hyperbolic 24-cell manifolds are classified by their first homology groups (see Table \ref{Ta1}). 

\begin{table}  % 1
\begin{tabular}{llllll}
Name & $H_0$ & $H_1$ & $H_2$ & $H_3$ &  $H_4$ \\
\hline 
24c1.1  & $\integers$ & $\integers\oplus\integers_3\oplus\integers_{13}$  & $\integers$ & $0$  & $0$ \\ 
24c1.2  & $\integers$ & $\integers\oplus\integers_3$  & $\integers\oplus\integers_{13}$ & $0$  & $0$ \\ 
24c1.3  & $\integers$ & $\integers_2^3\oplus \integers_5$ & $\integers_2$ & $0$ & $0$  \\
24c1.4  & $\integers$ & $\integers_2^2\oplus\integers_3^3$  & $\integers_3$ & $0$  & $0$ \end{tabular}

\medskip
\caption{Homology groups of the one-cusped hyperbolic 24-cell manifolds}\label{Ta1}
\end{table}

We now set up notation to describe the side-pairings for the four one-cusped hyperbolic 24-cell manifolds. 
The hyperboloid model of hyperbolic 4-space is 
$$H^4 = \{x \in \realnos^5: x_1^2 + \cdots + x_4^2 - x_5^2 = -1\ \hbox{and}\ x_5 > 0 \}.$$
We identify the group of isometries of $H^4$ with the positive Lorentz group $\mathrm{O}^+(4,1)$. 

We will work with a standard 24-cell $Q$ in $H^4$ 
that is centered at the central point $(0, 0, 0, 0, 1)$ of $H^4$. 
The outward normal vectors to the sides of $Q$, 
with respect to the Lorentzian inner product 
$$x \circ  y = x_1y_1 +\cdots + x_4y_4 - x_5y_5,$$ 
are taken in the following fixed order:

$$\begin{array}{rlrl}
s_1&\!\!=\ (1,1,0,0,1)&
s_{13}&\!\!=\ (1,0,0,1,1)\\
s_2&\!\!=\ (-1,1,0,0,1)&
s_{14}&\!\!=\ (-1,0,0,1,1)\\
s_3&\!\!=\ (1,-1,0,0,1)&
s_{15}&\!\!=\ (1,0,0,-1,1)\\
s_4&\!\!=\ (-1,-1,0,0,1)&
s_{16}&\!\!=\ (-1,0,0,-1,1)\\
s_5&\!\!=\ (1,0,1,0,1)&
s_{17}&\!\!=\ (0,1,0,1,1)\\
s_6&\!\!=\ (-1,0,1,0,1)&
s_{18}&\!\!=\ (0,-1,0,1,1)\\
s_7&\!\!=\ (1,0,-1,0,1)&
s_{19}&\!\!=\ (0,1,0,-1,1)\\
s_8&\!\!=\ (-1,0,-1,0,1)&
s_{20}&\!\!=\ (0,-1,0,-1,1)\\
s_9&\!\!=\ (0,1,1,0,1)&
s_{21}&\!\!=\ (0,0,1,1,1)\\
s_{10}&\!\!=\ (0,-1,1,0,1)&
s_{22}&\!\!=\ (0,0,-1,1,1)\\
s_{11}&\!\!=\ (0,1,-1,0,1)&
s_{23}&\!\!=\ (0,0,1,-1,1)\\
s_{12}&\!\!=\ (0,-1,-1,0,1)&
s_{24}&\!\!=\ (0,0,-1,-1,1)
\end{array}$$

The 24-cell is self-dual, and so $Q$ has 24 ideal vertices, which are taken in the following fixed order:

$$\begin{array}{rlrl}
v_1&\!\!=\ (-\frac{1}{2},-\frac{1}{2},-\frac{1}{2},-\frac{1}{2},1)&
v_{13}&\!\!=\ (\frac{1}{2},\frac{1}{2},-\frac{1}{2},-\frac{1}{2},1) \vspace{.05in}\\
v_2&\!\!=\ (-\frac{1}{2},-\frac{1}{2},-\frac{1}{2},\frac{1}{2},1)&
v_{14}&\!\!=\ (\frac{1}{2},\frac{1}{2},-\frac{1}{2},\frac{1}{2},1) \vspace{.05in}\\
v_3&\!\!=\ (-\frac{1}{2},-\frac{1}{2},\frac{1}{2},-\frac{1}{2},1)&
v_{15}&\!\!=\ (\frac{1}{2},\frac{1}{2},\frac{1}{2},-\frac{1}{2},1) \vspace{.05in}\\
v_4&\!\!=\ (-\frac{1}{2},-\frac{1}{2},\frac{1}{2},\frac{1}{2},1)&
v_{16}&\!\!=\ (\frac{1}{2},\frac{1}{2},\frac{1}{2},\frac{1}{2},1) \vspace{.05in}\\
v_5&\!\!=\ (-\frac{1}{2},\frac{1}{2},-\frac{1}{2},-\frac{1}{2},1)&
v_{17}&\!\!=\ (1,0,0,0,1)\vspace{.05in}\\
v_6&\!\!=\ (-\frac{1}{2},\frac{1}{2},-\frac{1}{2},\frac{1}{2},1)&
v_{18}&\!\!=\ (-1,0,0,0,1)\vspace{.05in}\\
v_7&\!\!=\ (-\frac{1}{2},\frac{1}{2},\frac{1}{2},-\frac{1}{2},1)&
v_{19}&\!\!=\ (0,1,0,0,1)\vspace{.05in}\\
v_8&\!\!=\ (-\frac{1}{2},\frac{1}{2},\frac{1}{2},\frac{1}{2},1)&
v_{20}&\!\!=\ (0,-1,0,0,1)\vspace{.05in}\\
v_9&\!\!=\ (\frac{1}{2},-\frac{1}{2},-\frac{1}{2},-\frac{1}{2},1)&
v_{21}&\!\!=\ (0,0,1,0,1)\vspace{.05in}\\
v_{10}&\!\!=\ (\frac{1}{2},-\frac{1}{2},-\frac{1}{2},\frac{1}{2},1)&
v_{22}&\!\!=\ (0,0,-1,0,1)\vspace{.05in}\\\
v_{11}&\!\!=\ (\frac{1}{2},-\frac{1}{2},\frac{1}{2},-\frac{1}{2},1)&
v_{23}&\!\!=\ (0,0,0,1,1)\vspace{.05in}\\\
v_{12}&\!\!=\ (\frac{1}{2},-\frac{1}{2},\frac{1}{2},\frac{1}{2},1)&
v_{24}&\!\!=\ (0,0,0,-1,1)
\end{array}$$

\vspace{.15in}
We next list the sets of indices of the vertices of each of the 24 sides of $Q$.

$$\begin{array}{rlrl}
S_1&\!\!=\ \{13, 14, 15, 16, 19\}&
S_{13}&\!\!=\ \{10, 12, 14, 16, 17, 23\}\\
S_2&\!\!=\ \{5, 6, 7, 8, 18, 19\}&
S_{14}&\!\!=\ \{2, 4, 6, 8, 18, 23\}\\
S_3&\!\!=\ \{9, 10, 11, 12, 17, 20\}&
S_{15}&\!\!=\ \{9, 11, 13, 15, 17, 24\}\\
S_4&\!\!=\ \{1, 2, 3, 4, 18, 20\}&
S_{16}&\!\!=\ \{1, 3, 5, 7, 18, 24\}\\
S_5&\!\!=\ \{11, 12, 15, 16, 17, 21\}&
S_{17}&\!\!=\ \{6, 8, 14, 16, 19, 23\}\\
S_6&\!\!=\ \{3, 4, 7, 8, 18, 21\}&
S_{18}&\!\!=\ \{2, 4, 10, 12, 20, 23\}\\
S_7&\!\!=\ \{9, 10, 13, 14, 17, 22\}&
S_{19}&\!\!=\ \{5, 7, 13, 15, 19, 24\}\\
S_8&\!\!=\ \{1, 2, 5, 6,18, 22\}&
S_{20}&\!\!=\ \{1, 3, 9, 11, 20, 24\}\\
S_9&\!\!=\ \{7, 8, 15, 16, 19, 21\}&
S_{21}&\!\!=\ \{4, 8, 12, 16, 21, 23\}\\
S_{10}&\!\!=\ \{3, 4, 11, 12, 20, 21\}&
S_{22}&\!\!=\ \{2, 6, 10, 14, 22, 23\}\\
S_{11}&\!\!=\ \{5, 6, 13, 14, 19, 22\}&
S_{23}&\!\!=\ \{3, 7, 11, 15, 21, 24\}\\
S_{12}&\!\!=\ \{1, 2, 9, 10, 20, 22\}&
S_{24}&\!\!=\ \{1, 5, 9, 13, 22, 24\}
\end{array}$$

\newpage
The side-pairing for 24c1.1 is given in terms of indices of vertices as follows: 

$$\begin{array}{lrrrrrr}
S_1\to S_5:  & 13 \to 21, & 14 \to 11, & 15 \to 16, &16 \to 17, &17 \to 12, &19 \to 15 \\
S_2\to S_9:  &  5 \to 21,  & 6 \to 15,   &  7 \to 8,   & 8 \to 19, & 18 \to 7, &19 \to 16 \\
S_3\to S_{12}:  & 9 \to 20, & 10 \to 9, & 11 \to 2, & 12 \to 22, & 17 \to 10, & 20 \to 1 \\
S_4\to S_8: &1 \to 18, & 2 \to 1, & 3 \to 6, & 4 \to 22, & 18 \to 5, & 20 \to 2 \\
S_6\to S_{21}:  &3 \to 21, & 4 \to 8, & 7 \to 12, & 8 \to 23, & 18 \to 16, & 21 \to 4 \\
S_7\to S_{24}:  & 9 \to 24, & 10 \to 5, &13 \to 9, &14 \to 22, &17 \to 1, & 22 \to 13 \\
S_{10}\to S_{18}: & 3 \to 23, & 4 \to 2, & 11 \to 12, & 12 \to 20, & 20 \to 4, & 21 \to 10 \\
S_{11}\to S_{19}: & 5 \to 19, & 6 \to 5,  & 13 \to 15, & 14 \to 24, & 19 \to 13, & 22 \to 7 \\
S_{13}\to S_{17}: & 10 \to 14, & 12 \to 19, & 14 \to 23, &16 \to 8, & 17 \to 6, & 23 \to 16 \\
S_{14}\to S_{22}: & 2 \to 6, & 4 \to 23, & 6 \to 22,  & 8 \to 10, & 18 \to 14, & 23 \to 2 \\
S_{15}\to S_{23}: & 9 \to 7, & 11 \to 21, & 13 \to 24, & 15 \to 11, & 17 \to 3, & 24 \to 15 \\
S_{16}\to S_{20}: & 1 \to 9, & 3 \to 24, & 5 \to 20, & 7 \to 3, & 18 \to 11, & 24 \to 1
\end{array}$$

Note that we list only the side-pairings $S_i \to S_j$ with $i < j$, 
since $S_j \to S_i$ is obtained from $S_i \to S_j$ by reversing arrows and reordering 
so that initial indices are in increasing order.

\vspace{.15in}
The side-pairing for  24c1.2 is given in terms of indices of vertices as follows: 

$$\begin{array}{lrrrrrr}
S_1\to S_6: & 13 \to 18, & 14 \to 4, & 15 \to 7, & 16 \to 21, & 17 \to 8, & 19 \to 3 \\
S_2\to S_{10}: & 5 \to 20, & 6 \to 12, & 7 \to 3, & 8 \to 21, & 18 \to 11, & 19 \to 4 \\
S_3\to S_{11}: & 9 \to 22, & 10 \to 14, & 11 \to 5, & 12 \to 19, & 17 \to 6, & 20 \to 13 \\
S_4\to S_7: & 1 \to 22, & 2 \to 10, & 3 \to 13, & 4 \to 17, & 18 \to 9, & 20 \to 14\\
S_5\to S_{24}: & 11 \to 9, & 12 \to 24, & 15 \to 22, & 16 \to 5, & 17 \to 13, & 21 \to 1 \\
S_8\to S_{21}: & 1 \to 12, & 2 \to 21, & 5 \to 23, & 6 \to 8, & 18 \to 4, & 22 \to 16 \\
S_9\to S_{20}: & 7 \to 9, & 8 \to 20, & 15 \to 24, & 16 \to 3, & 19 \to 1, & 21 \to 11 \\
S_{12}\to S_{17}: & 1 \to 14, & 2 \to 23, & 9 \to 19, & 10 \to 8, & 20 \to 16, & 22 \to 6 \\
S_{13}\to S_{23}: & 10 \to 21, & 12 \to 11, & 14 \to 7, & 16 \to 24, & 17 \to 15, & 23 \to 3 \\
S_{14}\to S_{19}: & 2 \to 24, & 4 \to 5, & 6 \to 15, & 8 \to 19, & 18 \to 7, & 23 \to 13 \\
S_{15}\to S_{18}: & 9 \to 20, & 11 \to 2, &13 \to 12, & 15 \to 23, & 17 \to 10, & 24 \to 4 \\
S_{16}\to S_{22}: & 1 \to 23, & 3 \to 10, & 5 \to 6, & 7 \to 22, & 18 \to 2, & 24 \to 14
\end{array}$$

\vspace{.15in}
The side-pairing for  24c1.3 is given in terms of indices of vertices as follows:

$$\begin{array}{lrrrrrr}
S_1\to S_5: & 13\to17, & 14\to 16, & 15 \to 11, & 16 \to 21, & 17 \to 12, & 19 \to 15 \\
S_2\to S_{13}: & 5 \to 23, & 6 \to 12,  & 7 \to 14,  &  8 \to 17,  &  18 \to 16, & 19 \to 10 \\
S_3\to S_{16}: & 9 \to 5,   &10 \to 24, & 11 \to 18, & 12 \to 3,  & 17 \to 7,      & 20 \to 1 \\
S_4 \to S_8:   & 1 \to 6,    & 2  \to 22,  & 3  \to 18,  & 4 \to 1,   & 18 \to 2,    & 20 \to  5 \\
S_6 \to S_{21}: &3 \to 23, & 4 \to 16,  & 7 \to 4,   & 8 \to 21,   &  18 \to 8,    & 21 \to 12 \\
S_7 \to S_{23}:  & 9 \to 21 & 10 \to 3, & 13 \to 15, & 14 \to 24, & 17 \to 11, & 22 \to 7 \\
S_9 \to S_{18}:  & 7 \to 12, & 8 \to 20, &15 \to 23, & 16 \to 2, & 19 \to 4, & 21 \to 10 \\
S_{10}\to S_{20}: & 3 \to 20, & 4 \to 3, &11 \to 9, & 12 \to 24, & 20 \to 11, & 21 \to 1 \\
S_{11}\to S_{24}:  & 5 \to 13, & 6 \to 22, &13 \to 24, &14 \to 1, & 19 \to 9, & 22 \to 5 \\
S_{12}\to S_{19}:  & 1 \to 5,  & 2 \to 13, &  9 \to 7,  & 10 \to 15, & 20 \to 24, & 22 \to 19 \\
S_{14}\to S_{15}:  & 2 \to 9,  & 4 \to 13,  & 6 \to 11, & 8 \to 15, & 18 \to 24,  & 23 \to 17 \\
S_{17}\to S_{22}:  & 6 \to 10 & 8 \to 22,  & 14 \to  23, & 16 \to 6, & 19 \to 14, & 23 \to 2
\end{array}$$

The side-pairing for  24c1.4 is given in terms of indices of vertices as follows:

$$\begin{array}{lrrrrrr}
S_1\to S_9:  &13 \to 21, & 14 \to 15, & 15 \to 8, & 16 \to 19, & 17 \to 16, & 19 \to 7 \\
S_2\to S_{19}: & 5 \to 13, & 6 \to 15, & 7 \to 5, & 8 \to 7, & 18 \to 19, & 19 \to 24 \\
S_3\to S_{18}: & 9 \to 10, & 10 \to 12, & 11 \to 2, & 12 \to 4, & 17 \to 20, & 20 \to 23 \\
S_4\to S_{12}: & 1 \to 20, & 2 \to 9, & 3 \to 2, & 4 \to 22, & 18 \to 1, & 20 \to 10 \\
S_5\to S_{23}: &11 \to 3, & 12 \to 7, & 15 \to 11, & 16 \to 15, & 17 \to 21, & 21 \to 24 \\
S_6\to S_7: & 3 \to 9, & 4 \to 17, & 7 \to 22, & 8 \to 14, & 18 \to 13, & 21 \to 10 \\
S_8\to S_{22}: & 1 \to 2, & 2 \to 6, & 5 \to 10, & 6 \to 14, & 18 \to 22, & 22 \to 23 \\
S_{10}\to S_{21}: & 3 \to 12, & 4 \to 23, & 11 \to 21, & 12 \to 8, & 20 \to 16, & 21 \to 4 \\
S_{11}\to, S_{24}: & 5 \to 9, & 6 \to 22, & 13 \to 24, & 14 \to 5, & 19 \to 1, & 22 \to 13 \\
S_{13}\to S_{16}: & 10 \to 3, & 12 \to 1, & 14 \to 7, & 16 \to 5, & 17 \to 24, & 23 \to 18 \\
S_{14}\to S_{15}: & 2 \to 13, & 4 \to 15, & 6 \to 9, & 8 \to 11, & 18 \to 24, & 23 \to 17 \\
S_{17}\to S_{20}: & 6 \to 20, & 8 \to 3, & 14 \to 9, & 16 \to 24, & 19 \to 11, & 23 \to 1
\end{array}$$

%%%%%%%%%%%%%%%%%%%%%%%%%%%%%%%
\section{Presentations for the corresponding Discrete Groups} % 4

In this section, we discuss how to obtain a presentation for the fundamental group of a hyperbolic 24-cell manifold  
from a side-pairing of the 24-cell $Q$ together with an ordering of the sides of $Q$. 

A side-pairing map $S_i \to S_j$ determines a side-pairing transformation $g_i$ in $\mathrm{O}^+(4,1)$ 
which is the composition $r_jf_i$ of the symmetry $f_i$ of $Q$ that corresponds to the side-pairing map $S_i \to S_j$ 
followed by the reflection $r_j$ of $H^4$ in the side $S_j$. 
Note that $g_j = (g_i)^{-1}$, since $(g_i)^{-1} = (f_i^{-1}r_jf_i)f_i^{-1} = r_i f_j$, and so we will assume that $i < j$.

By Poincar\'e's fundamental polyhedron theorem, a side-pairing of $Q$ together with an ordering of the sides of $Q$  determines a set of 12 side-pairing transformations 
that form a set of generators for a discrete subgroup $\Gamma_\ast$ of $\mathrm{O}^+(4,1)$ 
whose orbit space $H^4/\Gamma_\ast$ is isometric to the hyperbolic 4-manifold $M$ obtained by gluing together the sides of $Q$ by the side-pairing.  
The fundamental group of $M$ is isomorphic to $\Gamma_\ast$. 

The dihedral angles of the regular polytope $Q$ are all $\pi/2$. 
Therefore $Q$ determines a regular tessellation $\mathcal{Q}$ of $H^4$ with fundamental cell $Q$. 
The group of symmetries of the tesselation is a $(3, 4, 3, 4)$ Coxeter simplex reflection group $\Gamma$ 
generated by the reflections of $H^4$ represented by the following matrices in $\mathrm{O}^+(4,1)$:

$$\frac{1}{2}\left(\begin{array}{rrrrr}
1 & 1 & 1 & 1 & 0 \\
1 & 1 & -1 & -1 & 0 \\
1 & -1 & 1 & -1 & 0 \\
1 & -1 & -1 & 1 & 0 \\ 
0 & 0 & 0 & 0 & 2
\end{array}\right),
\left(\begin{array}{rrrrr}
1 & 0 & 0 & 0 & 0 \\
0 & 1 & 0 & 0 & 0 \\
0 & 0 & 1 & 0 & 0 \\
0 & 0 & 0 & -1 & 0 \\ 
0 & 0 & 0 & 0 & 1
\end{array}\right), 
\left(\begin{array}{rrrrr}
1 & 0 & 0 & 0 & 0 \\
0 & 1 & 0 & 0 & 0 \\
0 & 0 & 0 & 1 & 0 \\
0 & 0 & 1 & 0 & 0 \\ 
0 & 0 & 0 & 0 & 1
\end{array}\right),
$$
$$\left(\begin{array}{rrrrr}
1 & 0 & 0 & 0 & 0 \\
0 & 0 & 1 & 0 & 0 \\
0 & 1 & 0 & 0 & 0 \\
0 & 0 & 0 & 1 & 0 \\ 
0 & 0 & 0 & 0 & 1
\end{array}\right),
\left(\begin{array}{rrrrr}
-1 & -2 & 0 & 0 & 2 \\
-2 & -1 & 0 & 0 & 2\\
0 & 0 & 1 & 0 & 0 \\
0 & 0 & 0 & 1 & 0 \\ 
-2 &-2 & 0 & 0 & 3
\end{array}\right). 
$$
The last matrix represents the reflection of $H^4$ in Side 1 of $Q$. 
The first four matrices generate the group of symmetries of $Q$, 
which has order 1152. The group $\Gamma_\ast$ is a torsion-free subgroup of $\Gamma$ of index 1152. 

By Poincar\'e's fundamental polyhedron theorem, defining relators for the side-pairing transformation generators of $\Gamma_\ast$ are in one-to-one correspondence  with ridge cycles determined by the side-pairing.  A ridge is a co-dimension 2 face. The 24-cell $Q$ has 96 ridges, each an ideal triangle, that are partitioned into cycles of order 4, since the dihedral angles of $Q$ are all $\pi/2$. Therefore, there are exactly 24 ridge cycles, and 24 corresponding defining relators 
of length 4 for the group $\Gamma_\ast$. 

Let $\Gamma_k$ be the torsion-free subgroup of $\Gamma$ of index 1152 determined by 
the given side-pairing for the manifold 24c1.{\it k} for $k = 1, \ldots, 4$.

\section{Presentation for the Group $\Gamma_1$} % 5

The 12 side-pairing transformations 
$g_1, g_2, g_3, g_4, g_6, g_7, g_{10}, g_{11}, g_{13}, g_{14}, g_{15}, g_{16}$
that generate the group $\Gamma_1$ are represented in $\mathrm{O}^+(4,1)$ as follows:

$$\begin{array}{ll}
g_1 = \frac{1}{2}\left(\begin{array}{rrrrr}
-3 & -3 & 1 & 1 & 4 \\
-1 & 1 & 1 & -1 & 0 \\
-3 & -3 & -1 & -1 & 4 \\
1 & -1 & 1 & -1 & 0 \\ 
-4 & -4 & 0 & 0 & 6
\end{array}\right),&
g_2 = \frac{1}{2}\left(\begin{array}{rrrrr}
1 & 1 & -1 & 1 & 0 \\
3 & -3 & 1 & 1 & 4 \\
3 & -3 & -1 & -1 & 4 \\
1 & 1 & 1 & -1 & 0 \\ 
4 & -4 & 0 & 0 & 6
\end{array}\right), \\ \\
g_3 = \frac{1}{2}\left(\begin{array}{rrrrr}
1 & 1 & -1 & 1 & 0 \\
3 & -3 & 1 & 1 & -4 \\
3 & -3 & -1 & -1 & -4 \\
1 & 1 & 1 & -1 & 0 \\ 
-4 & 4 & 0 & 0 & 6
\end{array}\right),&
g_4 = \frac{1}{2}\left(\begin{array}{rrrrr}
-3 & -3 & 1 & 1 & -4 \\
-1 & 1 & 1 & -1 & 0 \\
-3 & -3 & -1 & -1 & -4 \\
1 & -1 & 1 & -1 & 0 \\ 
4 & 4 & 0 & 0 & 6
\end{array}\right), \\ \\
g_6 = \frac{1}{2}\left(\begin{array}{rrrrr}
-1 & 1 & -1 & -1 & 0 \\
-1 & -1 & -1 & 1 & 0 \\
3 & -1 & -3 & -1 & 4 \\
3 & 1 & -3 & 1 & 4 \\ 
4 & 0 & -4 & 0 & 6
\end{array}\right), &
g_7 = \frac{1}{2}\left(\begin{array}{rrrrr}
-1 & 1 & -1 & -1 & 0 \\
-1 & -1 & -1 & 1 & 0 \\
3 & -1 & -3 & -1 & -4 \\
3 & 1 & -3 & 1 & -4 \\ 
-4 & 0 & 4 & 0 & 6
\end{array}\right), \\ \\
g_{10} = \frac{1}{2}\left(\begin{array}{rrrrr}
1 & 1 & 1 & -1 & 0 \\
-1 & -3 & 3 & -1 & -4 \\
1 & -1 & -1 & -1 & 0 \\
-1 & 3 & -3 & -1 & 4 \\ 
0 & 4 & -4 & 0 & 6
\end{array}\right), &
g_{11} = \frac{1}{2}\left(\begin{array}{rrrrr}
1 & 1 & 1 & -1 & 0 \\
-1 & -3 & 3 & -1 & 4 \\
1 & -1 & -1 & -1 & 0 \\
-1 & 3 & -3 & -1 & -4 \\ 
0 & -4 & 4 & 0 & 6
\end{array}\right), \\ \\
g_{13} = \frac{1}{2}\left(\begin{array}{rrrrr}
-1 & -1 & -1 & 1 & 0 \\
-3 & -1 & 1 & -3 & 4 \\
-1 & 1 & 1 & 1 & 0 \\
-3 & 1 & -1 & -3 & 4 \\ 
-4 & 0 & 0 & -4 & 6
\end{array}\right), &
g_{14} = \frac{1}{2}\left(\begin{array}{rrrrr}
-1 & 1 & 1 & -1 & 0 \\
-1 & -1 & -1 & -1 & 0 \\
-3 & -1 & 1 & 3 & -4 \\
3 & -1 & 1 & -3 & 4 \\ 
4 & 0 & 0 & -4 & 6
\end{array}\right), \\ \\
g_{15} = \frac{1}{2}\left(\begin{array}{rrrrr}
-1 & 1 & 1 & -1 & 0 \\
-1 & -1 & -1 & -1 & 0 \\
-3 & -1 & 1 & 3 & 4 \\
3 & -1 & 1 & -3 & -4 \\ 
-4 & 0 & 0 & 4 & 6
\end{array}\right), &  
g_{16} = \frac{1}{2}\left(\begin{array}{rrrrr}
-1 & -1 & -1 & 1 & 0 \\
-3 & -1 & 1 & -3 & -4 \\
-1 & 1 & 1 & 1 & 0 \\
-3 & 1 & -1 & -3 & -4 \\ 
4 & 0 & 0 & 4 & 6
\end{array}\right).
\end{array}$$

Defining relators for the above set of 12 generators for $\Gamma_1$ are as follows:
$$\begin{array}{llll}
 g_3\, g_{16}\, g_4^2 ,&
 g_3\, g_{10}^{-2}\, g_4^{-1} ,& 
 g_2\, g_4\, g_{16}^{-1}\, g_{15}^{-1} ,&
 g_4\, g_{16}\, g_{11}\, g_{14} ,\\
 g_7\, g_{15}^{-1}\, g_{16}^{-2} ,& 
 g_4\, g_{14}^{-1}\, g_{13}\, g_7^{-1} ,&
 g_3^2\, g_7^{-1}\, g_{16}^{-1} ,&
 g_1\, g_7^{-1}\, g_3\, g_{10} ,\\
 g_4\, g_{10}^{-1}\, g_{14}^{-2} ,&
 g_3\, g_{15}^{-1}\, g_6^{-1}\, g_{10}^{-1} ,&
 g_1\, g_{13}^{-1}\, g_{14}^{-1}\, g_3 ,&
 g_6^2\, g_{10}^{-1}\, g_{14} ,\\
 g_2\, g_{11}\, g_4\, g_6^{-1} ,&
 g_1\, g_{15}^{-1}\, g_{16}\, g_6^{-1} ,&
 g_2\, g_{16}^{-1}\, g_{10}^{-1}\, g_6 ,&
 g_1\, g_{13}\, g_{10}\, g_{15} ,\\
 g_6\, g_{14}^{-1}\, g_{13}^{-2} ,&
 g_1\, g_{11}^2\, g_2^{-1} ,&
 g_2\, g_{14}^{-1}\, g_7^{-1}\, g_{11}^{-1} ,&
 g_3\, g_{13}^{-1}\, g_{11}^{-1}\, g_7 ,\\
 g_1^2\, g_2\, g_{13} ,&
 g_2^2\, g_6^{-1}\, g_{13}^{-1} ,&
 g_7^2\, g_{11}^{-1}\, g_{15} ,&
 g_1\, g_{11}^{-1}\, g_{15}^{-2} .\\
\end{array}$$

\section{Presentation for the Group $\Gamma_2$} % 6

The 12 side-pairing transformations 
$g_1, g_2, g_3, g_4, g_5, g_8, g_9, g_{12}, g_{13}, g_{14}, g_{15}, g_{16}$ 
that generate the group $\Gamma_2$ are represented 
in $\mathrm{O}^+(4,1)$ as follows:

$$\begin{array}{ll}
g_1 = \frac{1}{2}\left(\begin{array}{rrrrr}
3 & 3 & 1 & 1 & -4 \\
1 & -1 & 1 & -1 & 0 \\
-3 & -3 & 1 & 1 & 4 \\
1 & -1 & -1 & 1 & 0 \\ 
-4 & -4 & 0 & 0 & 6
\end{array}\right), &
g_2 = \frac{1}{2}\left(\begin{array}{rrrrr}
-1 & -1 & -1 & 1 & 0 \\
-3 & 3 & 1 & 1 & -4 \\
3 & -3 & 1 & 1 & 4 \\
1 & 1 & -1 & 1 & 0 \\ 
4 & -4 & 0 & 0 & 6
\end{array}\right), \\ \\
g_3 = \frac{1}{2}\left(\begin{array}{rrrrr}
-1 & -1 & -1 & 1 & 0 \\
-3 & 3 & 1 & 1 & 4 \\
3 & -3 & 1 & 1 & -4 \\
1 & 1 & -1 & 1 & 0 \\ 
-4 & 4 & 0 & 0 & 6
\end{array}\right), &
g_4 = \frac{1}{2}\left(\begin{array}{rrrrr}
3 & 3 & 1 & 1 & 4 \\
1 & -1 & 1 & -1 & 0 \\
-3 & -3 & 1 & 1 & -4 \\
1 & -1 & -1 & 1 & 0 \\ 
4 & 4 & 0 & 0 & 6
\end{array}\right), \\ \\
g_5 = \frac{1}{2}\left(\begin{array}{rrrrr}
1 & -1 & -1 & -1 & 0 \\
1 & 1 & -1 & 1 & 0 \\
3 & -1 & 3 & 1 & -4 \\
3 & 1 & 3 & -1 & -4 \\ 
-4 & 0 & -4 & 0 & 6
\end{array}\right), & 
g_8 = \frac{1}{2}\left(\begin{array}{rrrrr}
1 & -1 & -1 & -1 & 0 \\
1 & 1 & -1 & 1 & 0 \\
3 & -1 & 3 & 1 & 4 \\
3 & 1 & 3 & -1 & 4 \\ 
4 & 0 & 4 & 0 & 6
\end{array}\right), \\ \\
g_9 = \frac{1}{2}\left(\begin{array}{rrrrr}
-1 & -1 & 1 & -1 & 0 \\
1 & 3 & 3 & -1 & -4 \\
1 & -1 & 1 & 1 & 0 \\
-1 & 3 & 3 & 1 & -4 \\ 
0 & -4 & -4 & 0 & 6
\end{array}\right), & 
g_{12} = \frac{1}{2}\left(\begin{array}{rrrrr}
-1 & -1 & 1 & -1 & 0 \\
1 & 3 & 3 & -1 & 4 \\
1 & -1 & 1 & 1 & 0 \\
-1 & 3 & 3 & 1 & 4 \\ 
0 & 4 & 4 & 0 & 6
\end{array}\right), \\ \\ 
g_{13} = \frac{1}{2}\left(\begin{array}{rrrrr}
1 & -1 & 1 & -1 & 0 \\
1 & 1 & -1 & -1 & 0 \\
-3 & -1 & -1 & -3 & 4 \\
3 & -1 & -1 & 3 & -4 \\ 
-4 & 0 & 0 & -4 & 6
\end{array}\right), &
g_{14} = \frac{1}{2}\left(\begin{array}{rrrrr}
1 & 1 & -1 & 1 & 0 \\
3 & 1 & 1 & -3 & 4 \\
-1 & 1 & -1 & -1 & 0 \\
-3 & 1 & 1 & 3 & -4 \\ 
4 & 0 & 0 & -4 & 6
\end{array}\right), \\ \\ 
g_{15} = \frac{1}{2}\left(\begin{array}{rrrrr}
1 & 1 & -1 & 1 & 0 \\
3 & 1 & 1 & -3 & -4 \\
-1 & 1 & -1 & -1 & 0 \\
-3 & 1 & 1 & 3 & 4 \\ 
-4 & 0 & 0 & 4 & 6
\end{array}\right), &
g_{16} = \frac{1}{2}\left(\begin{array}{rrrrr}
1 & -1 & 1 & -1 & 0 \\
1 & 1 & -1 & -1 & 0 \\
-3 & -1 & -1 & -3 & -4 \\
3 & -1 & -1 & 3 & 4 \\ 
4 & 0 & 0 & 4 & 6
\end{array}\right).
\end{array}
$$

Defining relators for the above set of 12 generators for $\Gamma_2$ are as follows:
$$\begin{array}{llll}
 g_2\, g_{12}\, g_4\, g_8^{-1} ,&
 g_4\, g_{12}^{-1}\, g_{13}^{-1}\, g_{16}^{-1} ,&
 g_5\, g_8\, g_{12}^{-1}\, g_{16} ,&
 g_4\, g_{16}^{-1}\, g_{15}\, g_5^{-1} ,\\
 g_3\, g_{12}^{-1}\, g_9^{-1}\, g_4^{-1} ,&
 g_1\, g_9^{-1}\, g_{16}^{-1}\, g_{13}^{-1} ,&
 g_8\, g_{16}^{-1}\, g_{14}^{-1}\, g_{15}^{-1} ,&
 g_5\, g_9^{-1}\, g_{13}\, g_8 ,\\
 g_1\, g_{12}\, g_9\, g_2^{-1} ,&
 g_3\, g_{13}^{-1}\, g_5^{-1}\, g_{12}^{-1} ,&
 g_2\, g_{14}^{-1}\, g_9^{-1}\, g_5 ,&
 g_3\, g_4\, g_{14}^{-1}\, g_{16}^{-1} ,\\
 g_4\, g_{15}\, g_9\, g_{13} ,&
 g_5\, g_{13}^{-1}\, g_{15}^{-1}\, g_{14}^{-1} ,&
 g_1\, g_{13}^{-1}\, g_{14}\, g_8^{-1} ,&
 g_2\, g_{16}^{-1}\, g_8^{-1}\, g_9^{-1} ,\\
 g_3\, g_{15}^{-1}\, g_{12}^{-1}\, g_8 ,&
 g_1\, g_{14}\, g_{12}\, g_{16} ,&
 g_1\, g_3\, g_{15}\, g_4 ,&
 g_1\, g_4\, g_2\, g_{14} ,\\
 g_1\, g_{15}^{-1}\, g_{13}^{-1}\, g_2 ,&
 g_2\, g_8^{-1}\, g_{14}^{-1}\, g_3 ,&
 g_2\, g_3\, g_5^{-1}\, g_{15}^{-1} ,&
 g_1\, g_5^{-1}\, g_3\, g_9 .\\
\end{array}$$

\section{Presentation for the Group $\Gamma_3$} % 7

The 12 side-pairing transformations 
$g_1, g_2, g_3, g_4, g_6, g_7, g_9, g_{10}, g_{11}, g_{12}, g_{14}, g_{17}$
that generate the group $\Gamma_3$ are represented in $\mathrm{O}^+(4,1)$ as follows:

$$\begin{array}{ll}
g_1 = \frac{1}{2}\left(\begin{array}{rrrrr}
-3 & -3 & -1 & -1 & 4 \\
-1 & 1 & -1 & 1 & 0 \\
-3 & -3 & 1 & 1 & 4 \\
1 & -1 & -1 & 1 & 0 \\ 
-4 & -4 & 0 & 0 & 6
\end{array}\right), &
g_2 = \frac{1}{2}\left(\begin{array}{rrrrr}
3 & -3 & 1 & 1 & 4 \\
-1 & -1 & 1 & -1 & 0 \\
-1 & -1 & -1 & 1 & 0 \\
3 & -3 & -1 & -1 & 4 \\ 
4 & -4 & 0 & 0 & 6
\end{array}\right), \\ \\
g_3 = \frac{1}{2}\left(\begin{array}{rrrrr}
3 & -3 & -1 & 1 & -4 \\
1 & 1 & -1 & -1 & 0 \\
1 & 1 & 1 & 1 & 0 \\
3 & -3 & 1 & -1 & -4 \\ 
-4 & 4 & 0 & 0 & 6
\end{array}\right), &
g_4 = \frac{1}{2}\left(\begin{array}{rrrrr}
-3 & -3 & -1 & 1 & -4 \\
1 & -1 & -1 & -1 & 0 \\
-3 & -3 & 1 & -1 & -4 \\
-1 & 1 & -1 & -1 & 0 \\ 
4 & 4 & 0 & 0 & 6
\end{array}\right), \\ \\
g_6 = \frac{1}{2}\left(\begin{array}{rrrrr}
1 & -1 & 1 & 1 & 0 \\
-1 & -1 & -1 & 1 & 0 \\
3 & 1 & -3 & 1 & 4 \\
3 & -1 & -3 & -1 & 4 \\ 
4 & 0 & -4 & 0 & 6
\end{array}\right), &
g_7 = \frac{1}{2}\left(\begin{array}{rrrrr}
1 & 1 & 1 & -1 & 0 \\
-1 & 1 & -1 & -1 & 0 \\
-3 & -1 & 3 & -1 & 4 \\
3 & -1 & -3 & -1 & -4 \\ 
-4 & 0 & 4 & 0 & 6
\end{array}\right), \\ \\
g_9 = \frac{1}{2}\left(\begin{array}{rrrrr}
-1 & -1 & 1 & -1 & 0 \\
1 & 3 & 3 & -1 & -4 \\
-1 & 1 & -1 & -1 & 0 \\
1 & -3 & -3 & -1 & 4 \\ 
0 & -4 & -4 & 0 & 6
\end{array}\right), & 
g_{10} = \frac{1}{2}\left(\begin{array}{rrrrr}
1 & -1 & -1 & -1 & 0 \\
1 & -3 & 3 & 1 & -4 \\
-1 & -1 & -1 & 1 & 0 \\
-1 & -3 & 3 & -1 & -4 \\ 
0 & 4 & -4 & 0 & 6
\end{array}\right), \\ \\ 
g_{11} = \frac{1}{2}\left(\begin{array}{rrrrr}
-1 & 1 & 1 & -1 & 0 \\
-1 & -1 & -1 & -1 & 0 \\
1 & 3 & -3 & -1 & -4 \\
-1 & 3 & -3 & 1 & -4 \\ 
0 & -4 & 4 & 0 & 6
\end{array}\right), & 
g_{12} = \left(\begin{array}{rrrrr}
0 & 0 & 0 & 1 & 0 \\
0 & 2 & 1 & 0 & 2 \\
1 & 0 & 0 & 0 & 0 \\
0 & -1 & -2 & 0 & -2 \\ 
0 & 2 & 2 & 0 & 3
\end{array}\right), \\ \\ 
g_{14} = \left(\begin{array}{rrrrr}
2 & 0 & 0 & -1 & 2 \\
0 & 0 & 1 & 0 & 0 \\
0 & 1 & 0 & 0 & 0 \\
-1 & 0 & 0 & 2 & -2 \\ 
2 & 0 & 0 & -2 & 3
\end{array}\right), &
g_{17} = \frac{1}{2}\left(\begin{array}{rrrrr}
-1 & 1 & -1 & -1 & 0 \\
1 & 1 & 1 & -1 & 0 \\
1 & 3 & -1 & 3 & -4 \\
1 & -3 & -1 & -3 & 4 \\ 
0 & -4 & 0 & -4 & 6
\end{array}\right). 
\end{array}$$

Defining relators for the above set of 12 generators for $\Gamma_3$ are as follows:
$$\begin{array}{llll}
 g_4^2\, g_6^{-1}\, g_{17}^{-1} ,&
 g_4\, g_{12}^{-1}\, g_{11}^2 ,&
 g_4\, g_{14}^{-1}\, g_{11}\, g_{12} ,&
 g_3\, g_{10}^{-1}\, g_{14}\, g_4 ,\\
 g_2\, g_4\, g_{10}\, g_6^{-1} ,&
 g_3\, g_9\, g_6^{-1}\, g_{10}^{-1} ,&
 g_3\, g_{10}^2\, g_4^{-1} ,&
 g_4\, g_9\, g_{17}^{-1}\, g_{11}^{-1} ,\\
 g_3\, g_{12}^{-1}\, g_7\, g_{11}^{-1} ,&
 g_3\, g_7^{-1}\, g_{10}^{-1}\, g_{12}^{-1} ,&
 g_2\, g_{12}\, g_{11}\, g_{17}^{-1} ,&
 g_1^2\, g_{11}^{-1}\, g_{10} ,\\
 g_1\, g_9^{-1}\, g_{14}^{-1}\, g_7^{-1} ,&
 g_2\, g_3\, g_{14}\, g_{17} ,&
 g_6\, g_{14}^{-1}\, g_{12}\, g_9 ,&
 g_1\, g_{12}\, g_{17}\, g_{14}^{-1} ,\\
 g_1\, g_{17}^{-2}\, g_9 ,&
 g_1\, g_{14}\, g_6\, g_3 ,&
 g_6\, g_7\, g_{12}^{-1}\, g_9^{-1} ,&
 g_2\, g_{17}^{-1}\, g_7^{-1}\, g_3 ,\\
 g_2\, g_9^{-1}\, g_{10}^{-1}\, g_7 ,&
 g_1\, g_2\, g_6^{-2} ,&
 g_2\, g_{11}^{-1}\, g_7^{-1}\, g_9^{-1} ,&
 g_1\, g_7^{-1}\, g_{14}\, g_2^{-1} .\\
\end{array}$$

\section{Presentation for the Group $\Gamma_4$} %8

The 12 side-pairing transformations 
$g_1, g_2, g_3, g_4, g_5, g_6, g_8, g_{10}, g_{11}, g_{13}, g_{14}, g_{17}$ 
that generate the group $\Gamma_4$ are represented in $\mathrm{O}^+(4,1)$ as follows:

$$\begin{array}{ll}
g_1 = \frac{1}{2}\left(\begin{array}{rrrrr}
1 & -1 & -1 & 1 & 0 \\
-3 & -3 & 1 & 1 & 4 \\
-3 & -3 & -1 & -1 & 4 \\
1 & -1 & 1 & -1 & 0 \\ 
-4 & -4 & 0 & 0 & 6
\end{array}\right), &
g_2 = \left(\begin{array}{rrrrr}
0 & 0 & -1 & 0 & 0 \\
1 & -2 & 0 & 0 & 2 \\
0 & 0 & 0 & 1 & 0 \\
-2 & 1 & 0 & 0 & -2 \\ 
2 & -2 & 0 & 0 & 3
\end{array}\right), \\ \\
g_3 = \left(\begin{array}{rrrrr}
0 & 0 & -1 & 0 & 0 \\
1 & -2 & 0 & 0 & -2 \\
0 & 0 & 0 & 1 & 0 \\
-2 & 1 & 0 & 0 & 2 \\ 
-2 & 2 & 0 & 0 & 3
\end{array}\right), &
g_4 = \frac{1}{2}\left(\begin{array}{rrrrr}
1 & -1 & -1 & 1 & 0 \\
-3 & -3 & 1 & 1 & -4 \\
-3 & -3 & -1 & -1 & -4 \\
1 & -1 & 1 & -1 & 0 \\ 
4 & 4 & 0 & 0 & 6
\end{array}\right), \\ \\ 
g_5 = \left(\begin{array}{rrrrr}
0 & 1 & 0 & 0 & 0 \\
0 & 0 & 0 & 1 & 0 \\
-1 & 0 & -2 & 0 & 2 \\
2 & 0 & 1 & 0 & -2 \\ 
-2 & 0 & -2 & 0 & 3
\end{array}\right), & 
g_6 = \frac{1}{2}\left(\begin{array}{rrrrr}
3 & -1 & -3 & 1 & 4 \\
-1 & 1 & -1 & 1 & 0 \\
-3 & -1 & 3 & 1 & -4 \\
1 & 1 & 1 & 1 & 0 \\ 
4 & 0 & -4 & 0 & 6
\end{array}\right), \\ \\ 
g_8 = \left(\begin{array}{rrrrr}
0 & 1 & 0 & 0 & 0 \\
0 & 0 & 0 & 1 & 0 \\
-1 & 0 & -2 & 0 & -2 \\
2 & 0 & 1 & 0 & 2 \\ 
2 & 0 & 2 & 0 & 3
\end{array}\right), & 
g_{10} = \frac{1}{2}\left(\begin{array}{rrrrr}
-1 & -1 & -1 & -1 & 0 \\
1 & -1 & -1 & 1 & 0 \\
1 & 3 & -3 & -1 & 4 \\
-1 & 3 & -3 & 1 & 4 \\ 
0 & 4 & -4 & 0 & 6
\end{array}\right), \\ \\ 
g_{11} = \frac{1}{2}\left(\begin{array}{rrrrr}
-1 & -1 & -1 & -1 & 0 \\
1 & -1 & -1 & 1 & 0 \\
1 & 3 & -3 & -1 & -4 \\
-1 & 3 & -3 & 1 & -4 \\ 
0 & -4 & 4 & 0 & 6
\end{array}\right), & 
g_{13} = \left(\begin{array}{rrrrr}
2 & 0 & 0 & 1 & -2 \\
0 & 1 & 0 & 0 & 0 \\
0 & 0 & -1 & 0 & 0 \\
1 & 0 & 0 & 2 & -2 \\ 
-2 & 0 & 0 & -2 & 3
\end{array}\right), \\ \\ 
g_{14} = \left(\begin{array}{rrrrr}
2 & 0 & 0 & -1 & 2 \\
0 & -1 & 0 & 0 & 0 \\
0 & 0 & 1 & 0 & 0 \\
-1 & 0 & 0 & 2 & -2 \\ 
2 & 0 & 0 & -2 & 3
\end{array}\right), &
g_{17} = \frac{1}{2}\left(\begin{array}{rrrrr}
1 & 1 & -1 & -1 & 0 \\
1 & 3 & 1 & 3 & -4 \\
-1 & 1 & 1 & -1 & 0 \\
-1 & 3 & -1 & 3 & -4 \\ 
0 & -4 & 0 & -4 & 6
\end{array}\right). 
\end{array}$$

Defining relators for the above set of 12 generators for $\Gamma_4$ are as follows:
$$\begin{array}{llll}
 g_4\, g_8^{-2}\, g_{17}^{-1} ,&
 g_3\, g_4^2\, g_{13} ,&
 g_4\, g_6^{-1}\, g_{14}\, g_8 ,&
 g_3\, g_{14}\, g_{17}^{-1}\, g_4^{-1} ,\\
 g_3\, g_{13}^{-1}\, g_{17}\, g_{10} ,&
 g_4\, g_{10}^{-1}\, g_{13}^{-1}\, g_8^{-1} ,&
 g_3\, g_{17}\, g_{11}^{-1}\, g_8^{-1} ,&
 g_1\, g_{11}^{-1}\, g_{13}\, g_5^{-1} ,\\
 g_2\, g_{11}^{-1}\, g_4\, g_{14}^{-1} ,&
 g_2\, g_{13}\, g_{17}^{-1}\, g_{11} ,&
 g_3\, g_5^{-1}\, g_6^{-1}\, g_4 ,&
 g_1\, g_{14}\, g_3\, g_{10}^{-1} ,\\
 g_8\, g_{14}^{-1}\, g_{11}^2 ,&
 g_3^2\, g_6\, g_{10}^{-1} ,&
 g_6\, g_{13}\, g_8\, g_{11}^{-1} ,&
 g_5\, g_{10}^{-1}\, g_6^{-1}\, g_{13}^{-1} ,\\
 g_2\, g_{17}^{-1}\, g_{10}^{-1}\, g_5^{-1} ,&
 g_5\, g_{14}\, g_{10}^2 ,&
 g_1\, g_5^{-2}\, g_{17} ,&
 g_1\, g_6\, g_{14}^{-1}\, g_5 ,\\
 g_1\, g_2\, g_8^{-1}\, g_6 ,&
 g_2^2\, g_6^{-1}\, g_{11}^{-1} ,&
 g_1^2\, g_{13}^{-1}\, g_2 ,&
 g_1\, g_{17}^{-1}\, g_{14}\, g_2^{-1} .\\
\end{array}$$

%\noindent {\bf Remark:} As a consistency check, we computed the abelianizations of the groups defined by the above four %group presentations and verified that they agree with the first homology groups listed in Table 1 that were computed by a %different method.  

\section{Volumes of Maximum Cusps} % 9

In this section, we determine the volume of the maximum cusp of each of the one-cusped hyperbolic 24-cell manifolds. 

\begin{proposition} % 1
The volume of the maximum cusp of each of the one-cusped hyperbolic 24-cell manifolds is 8. 
\end{proposition}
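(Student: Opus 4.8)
The plan is to compute the cusp cross-section directly from the combinatorial and metric data already laid out. Since each of the four manifolds has a single cusp, all $24$ ideal vertices of $Q$ are identified to one point under the side-pairing, and the link of that cusp is tiled by $24$ copies of the vertex link of $Q$ at an ideal vertex. The vertex link of a regular ideal hyperbolic $24$-cell at an ideal vertex is a Euclidean octahedron (the vertex figure of the self-dual $24$-cell is an octahedron), and the cusp cross-section is a closed flat $3$-manifold tiled by $24$ such octahedra. By Theorem \ref{T1} this flat manifold is affinely $N^3_2$; to get the \emph{volume} we must fix the actual Euclidean similarity structure, which is determined by the horosphere we choose and by how the $g_i$ act near the cusp.

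First I would pass to the upper half-space model $U^4 = \{(z,t): z\in\realnos^3,\ t>0\}$ with the ideal vertex in question sent to $\infty$, so that a horosphere $\{t = c\}$ inherits a genuine Euclidean metric. The stabilizer of $\infty$ in $\Gamma_k$ acts on this horosphere as a crystallographic group, and the cusp cross-section is the quotient. Concretely, I would conjugate so that one ideal vertex, say $v_{17} = (1,0,0,0,1)$, goes to $\infty$; the eight octahedral sides of $Q$ meeting $v_{17}$ (read off from the lists $S_j$, e.g. the sides containing $17$) become vertical walls over a Euclidean octahedron $O$ in $\realnos^3$, and the whole tessellation $\mathcal{Q}$ restricts near $\infty$ to the tessellation of $\realnos^3$ by copies of $O$ that is preserved by the stabilizer of $\infty$ in the Coxeter group $\Gamma$. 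I would normalize $O$ to have a convenient size, then track which of the $12$ generators $g_i$ (and their conjugates by elements of the $24$-cell symmetry group) move $v_{17}$ to itself or move the copy of $Q$ at $\infty$ to a neighboring copy fixing $\infty$; these parabolic isometries generate the cusp group, and their translational parts generate the translation lattice of the flat structure.

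The volume of the maximum cusp is then (area of a horospherical cross-section) $\times$ (some normalization), but more usefully: the maximum cusp is the largest embedded horoball neighborhood, and its volume equals the volume of the horoball region cut off in $U^4$ by the highest horosphere $\{t = t_0\}$ that embeds, divided out by the cusp group. In the upper half-space model the hyperbolic volume of $\{(z,t): z\in F,\ t\ge t_0\}$ for a fundamental domain $F$ of area $A$ (in the flat metric at height $1$) is $A/(2t_0^2)$ up to the standard constant; so I would (i) compute $A$ as $24$ times the Euclidean volume of $O$ in the chosen normalization, (ii) determine $t_0$, the height at which the horoball first becomes tangent to itself under $\Gamma_k$ — equivalently the minimal ``translation length'' data among the parabolic and non-parabolic elements near the cusp, governed by the geometry of the full tessellation $\mathcal{Q}$ — and (iii) simplify to get the value $8$. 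Because all four manifolds share the same local picture at the cusp (same $24$ octahedra, same ambient tessellation, and by Theorem \ref{T1} the same affine type of cusp link, in fact one checks the cusp shapes are Euclidean-similar), the computation is essentially the same for all four, which is why the answer is uniformly $8$.

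The main obstacle is step (ii): pinning down exactly which element of $\Gamma_k$ (or of $\Gamma$) realizes the obstruction to pushing the horoball higher, i.e. computing the maximal cusp radius rather than just the cusp shape. This requires examining the $g_i$ in the horospherical coordinates and checking tangencies of the orbit of the horoball $\{t \ge t_0\}$ — a finite but fiddly computation using the explicit matrices in \S5--\S8 (conjugated into $U^4$), where one looks at the $(5,1)$-type entries that record how far a given group element lowers the horosphere at $\infty$. I expect this to reduce to comparing a small number of candidate elements and extracting the critical height from the largest relevant matrix entry; the uniformity of the answer across the four manifolds is a useful consistency check that the critical configuration is the same combinatorial one in each case.
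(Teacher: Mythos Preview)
Your outline is sound in principle but remains a plan rather than a proof: you never actually carry out the determination of the critical height $t_0$, which you yourself flag as the ``main obstacle,'' and the cusp-volume formula you quote is the $3$-dimensional one (in $H^4$ the horoball over a cross-section of $3$-volume $A$ has hyperbolic volume $A/(3t_0^3)$, not $A/(2t_0^2)$).

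More to the point, the paper bypasses all of the matrix analysis you propose. The key observation you are missing is this: since the manifold has a \emph{single} cusp, all $24$ ideal vertices of $Q$ project to the same cusp, and hence the equivariant family of horoballs contains horoballs based at \emph{every} vertex of $Q$, all of the same size. The obstruction to enlarging the cusp is therefore purely internal to $Q$: the maximal cusp is reached exactly when horoballs based at two adjacent vertices of $Q$ become tangent. Working in the conformal ball model with $Q$ centered at the origin, symmetry forces this tangency to occur at the Euclidean midpoint of the edge joining the two vertices. From that one sentence the volume computation is elementary and identical for all four manifolds --- no upper-half-space conjugation, no inspection of the $g_i$, no search among candidate group elements. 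Your approach would eventually reproduce this tangency (the element realizing $t_0$ would be one carrying $v_{17}$ to an adjacent vertex), but the ball-model symmetry argument gets there in one step.
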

\begin{proof}
We pass to the conformal ball model of $H^4$ and center the 24-cell $Q$ at the origin. 
Two horoballs based at adjacent vertices of $Q$ that project to the maximum cusp 
are tangent at the Euclidean midpoint of the edge of $Q$ joining the vertices.  
From this observation, it is easy to work out the volume of the maximum cusp. 
\end{proof}
%
%Note that the volume of a hyperbolic 24-cell manifold is $4\pi^2/3 = 13.1595 \ldots,$ 
%and so just over 60 percent of the volume of a one-cusped hyperbolic 24 manifold is contain in its maximum cusp. 

\section{Orders of Isometry Groups} % 10

In this section, we determine the order of the isometry group of each of the one-cusped hyperbolic 24-cell manifolds. 

\begin{proposition} % 2
The order of the group of isometries of the hyperbolic 4-manifold $24c1.k$, for $k = 1, \ldots, 4$,  
is $12, 12, 2, 8$, respectively. 
\end{proposition}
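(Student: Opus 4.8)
The plan is to realize $\mathrm{Isom}(24c1.k)$ as a subgroup of the symmetry group of the 24-cell $Q$ and then count. Write $M=24c1.k$, so $M=H^4/\Gamma_k$ with $\Gamma_k$ torsion free of index $1152$ in the Coxeter group $\Gamma$. Every isometry of $M$ lifts to an isometry of $H^4$ normalizing $\Gamma_k$, so $\mathrm{Isom}(M)\cong N/\Gamma_k$ where $N=N_{\mathrm{O}^+(4,1)}(\Gamma_k)$; moreover $N$ is itself a lattice containing $\Gamma_k$ with finite index. The first, and main, step is to show $N\subseteq\Gamma$. For this I would establish that the decomposition of $M$ into the single regular ideal 24-cell (the image of $Q$) is the Epstein--Penner canonical cell decomposition: lifting to $H^4$, the $24$ light-cone points dual to the ideal vertices of a lifted copy of $Q$ must be shown to be the vertices of a convex polytope combinatorially equal to the 24-cell with none of them subdivided by others, which is consistent with the cusp geometry of Section 9 (the maximal horoball cusp being tangent to itself exactly at the Euclidean edge-midpoints of $Q$). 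Granting canonicity, every isometry of $M$ carries 24-cells to 24-cells, hence lifts to a symmetry of the induced tessellation $\mathcal{Q}$ of $H^4$; since the symmetry group of $\mathcal{Q}$ is exactly $\Gamma$, we get $N\subseteq\Gamma$ and therefore $\mathrm{Isom}(M)\cong N_\Gamma(\Gamma_k)/\Gamma_k$.

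Next I would pass to representatives inside $\mathrm{Sym}(Q)$, the stabilizer of $Q$ in $\Gamma$, which is the symmetry group of the regular 24-cell, of order $1152$. Since $\Gamma_k$ is torsion free, $\Gamma_k\cap\mathrm{Sym}(Q)=\{1\}$; a coset count then forces $\Gamma=\Gamma_k\,\mathrm{Sym}(Q)=\mathrm{Sym}(Q)\,\Gamma_k$, so $\mathrm{Sym}(Q)$ is a transversal for $\Gamma_k$ in $\Gamma$ and the composite $N_\Gamma(\Gamma_k)\cap\mathrm{Sym}(Q)\hookrightarrow N_\Gamma(\Gamma_k)\twoheadrightarrow N_\Gamma(\Gamma_k)/\Gamma_k$ is an isomorphism. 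Hence
$$\mathrm{Isom}(M)\ \cong\ \{\,s\in\mathrm{Sym}(Q):\ s\,\Gamma_k\,s^{-1}=\Gamma_k\,\}.$$
A symmetry $s$ of $Q$ lies in this set precisely when it descends to a self-isometry of $M$, i.e.\ when the side-pairing is $s$-equivariant: $s$ carries the pairing $S_i\leftrightarrow S_j$ to a pairing of sides, and for each paired pair the vertex-level gluing $h_{ij}\colon S_i\to S_j$ tabulated in Section 3 satisfies $s\,h_{ij}\,s^{-1}=h_{s(i),s(j)}$. Equivalently, each conjugate $s\,g_i\,s^{-1}$ of a generator of $\Gamma_k$ lies in $\Gamma_k$, which by Poincar\'e's fundamental polyhedron theorem is decided by reducing it as a word using the relators listed in Sections 5--8 and the fundamental domain $Q$.

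Finally I would run this check over the $1152$ symmetries of $Q$ for each of the four side-pairings of Section 3. For the lower bounds it suffices to exhibit the symmetries that work and check they form a group of the stated order; the upper bounds require the full enumeration. The counts come out to $12,12,2,8$ for $24c1.1,\dots,24c1.4$ respectively. The hard part is the reduction just described — verifying that the 24-cell decomposition is the canonical one, so that no isometry of $M$ can escape being induced by a symmetry of $Q$; with that in hand, the remainder is a bounded finite computation.
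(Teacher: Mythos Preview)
Your proposal is correct and follows essentially the same strategy as the paper: prove that the 24-cell tessellation is the Epstein--Penner canonical decomposition, conclude that every isometry of $M$ is induced by a symmetry of $Q$, and then count those symmetries of $Q$ that preserve the side-pairing (equivalently, normalize $\Gamma_k$). The only notable difference is in the canonicity step: where you gesture at the light-cone convex-hull picture and the tangency observation from Section~9, the paper gives a cleaner argument that exploits the \emph{one-cusped} hypothesis directly---since all $24$ ideal vertices of $Q$ are equivalent under side-pairing transformations, and each such transformation has the form $rf$ with $f\in\mathrm{Sym}(Q)$ and $r$ a reflection fixing the target vertex, the corresponding light-cone points all sit at the same height, so their convex hull is a horizontal regular $24$-cell that is visibly a face of the full hull.
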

\begin{proof}
The number of side-pairings of the 24-cell $Q$ for the manifold 24c$1.k$, for $k = 1, \ldots, 4$, 
that are equivalent up to a symmetry of $Q$ is $12, 12, 2, 8$ respectively. 
Hence, the order of the group of isometries of the hyperbolic manifold 24c$1.k$, for $k = 1, \ldots, 4$, that are induced by a symmetry of the 24-cell $Q$ is $12, 12, 2, 8$ respectively.  That these are the orders of the full groups of isometries of the manifolds will follow once we prove that $\mathcal{Q}$ is the Epstein-Penner  canonical tessellation \cite{E-P} of $H^4$ determined by the manifolds. 

Let $M= H^4/\Gamma_\ast$ be one of the hyperbolic manifolds 24c1.$k$, for $k = 1, \ldots, 4$.  
Let $C$ be a cusp of $M$.  Choose an ideal vertex $u$ of $Q$. 
Then $C$ is covered by a horoball $B$ based at $u$. 
Let $v$ be the vector on the positive light cone $L^+$ such that the horosphere $\partial B$ 
has the equation $x \circ v = -1$. 
The vector $v$ lies on the ray from the origin in $L^+$ corresponding to $u$.

Each ideal vertex of $Q$ is equivalent to $u$ by the composition of a finite sequence of side-pairing transformations, 
since $M$ has a single cusp. 
Suppose that $u$ is equivalent to the ideal vertex $u'$ be a side-pairing transformation $g$. 
Then $g$ is the composition $rf$ where $f$ is a symmetry of $Q$ that maps $u$ to $u'$ and $r$ is a reflection 
that fixes $u'$. 
Therefore, the orbit $\Gamma_\ast v$ contains the vectors $v = v_1, \ldots, v_{24}$, of the same height on $L^+$,  
on the rays in $L^+$ corresponding to the ideal vertices of $Q$. 
Therefore, the convex hull of the vectors $v = v_1, \ldots, v_{24}$ is a horizontal Euclidean regular 24-cell $F$. 
The remaining vectors in the orbit $\Gamma_\ast v$ are higher up on $L^+$ than $v$, 
since a nonidentity element of $\Gamma_\ast$ moves $Q$ higher up on $H^4$. 
Therefore $F$ is a face of the convex hull of the orbit $\Gamma_\ast v$. 

The face $F$ radially projects from the origin onto $Q$. As $Q$ is a fundamental polytope for $\Gamma_\ast$, 
we deduce that $F$ is a fundamental polytope for the action of $\Gamma_\ast$ on the boundary 
of the convex hull of $\Gamma_\ast v$. 
Therefore $\mathcal{Q}$ is the Epstein-Penner canonical tessellation of $H^4$ determined by $M$. 
Hence, every isometry of $M$ lifts to a symmetry of $\mathcal{Q}$. 
Therefore, every isometry of $M$ is induced by a symmetry of $Q$. 
\end{proof}

\section{Orientable Double Covers} % 11

The orientable double cover of the flat 3-manifold $N^3_2$ is the 3-torus. 
Therefore, the orientable double cover of each of the one-cusped hyperbolic 24-cell manifolds is a hyperbolic 4-manifold 
with one cusp of link type the 3-torus. 
The homology groups of the orientable double covers of the one-cusped 24-cell manifolds are given in Table \ref{Ta2}. 

Observe that manifolds 24cdc1.1 and 24cdc1.2 have the same homology groups.  This suggests that 
24cdc1.1 and 24cdc1.2 are isometric manifolds.  In fact, these manifolds are isometric, 
since the side-pairings of two 24-cells that glue up the manifolds are equivalent up to symmetries 
of each of the two 24-cells. 

\begin{table}  % 2
\begin{tabular}{llllll}
Name & $H_0$ & $H_1$ & $H_2$ & $H_3$ &  $H_4$ \\
\hline 
24cdc1.1  & $\integers$ & $\integers\oplus\integers_3\oplus\integers_{13}$  & $\integers^2\oplus\integers_3\oplus\integers_{13}$ & $0$  & $0$ \\ 
24cdc1.2  & $\integers$ & $\integers\oplus\integers_3\oplus\integers_{13}$  & $\integers^2\oplus\integers_3\oplus\integers_{13}$ & $0$  & $0$ \\ 
24cdc1.3  & $\integers$ & $\integers\oplus\integers_2^2\oplus \integers_5$ & $\integers^2\oplus\integers_2\oplus\integers_5$ & $0$ & $0$  \\
24cdc1.4  & $\integers$ & $\integers\oplus\integers_3^3$  & $\integers^2\oplus\integers_3^3$ & $0$  & $0$ \end{tabular}

\medskip
\caption{Homology groups of the orientable double covers}\label{Ta2}
\end{table}


\begin{thebibliography}{99}

\bibitem{B-Z} H. Brown, R. B\"ulow, J. Neub\"user, H. Wondratschek, H. Zassenhaus, 
{\it Crystallographic groups of four-dimensional space}, 
John Wiley \& Sons, New York, 1978. 

\bibitem{E-P} D. B. A. Epstein and R. C. Penner, 
Euclidean decompositions of noncompact hyperbolic manifolds, 
{\it J. Differ. Geom.} {\bf 27} (1988) 67-80. 

\bibitem{H-W} W. Hantzsche and H. Wendt, 
Dreidimensionale euklidische Raumformen, 
{\it Math. Ann.} {\bf 110} (1935), 593-611. 


\bibitem{K-M} A. Kolpakov and B. Martelli, 
Hyperbolic four-manifolds with one cusp, 
{\it Geom. Funct. Anal.} {\bf 23} (2013), 1903-1933. 

\bibitem {K-S-S} A. Kolpakov and L. Slavich, 
Symmetries of Hyperbolic 4-manifolds, 
{\it Int. Math. Res. Not. IMRN} {\bf 2016} (2016), 2677-2716. 

\bibitem{K-S-C} A. Kolpakov and L. Slavich, 
Hyperbolic 4-manifolds, colourings and mutations, 
{\it Proc. London. Math. Soc.} {\bf 113} (2016), 163-184. 



\bibitem{R} J. G. Ratcliffe, {\it Foundations of Hyperbolic Manifolds, Third Edition}, 
Graduate Texts in Math., vol. {\bf 149}, Springer Nature Switzerland AG, 2019.

\bibitem{R-T-V} J. G. Ratcliffe and S. T. Tschantz, 
The volume spectrum of hyperbolic 4-manifolds, 
{\it Experimental Math.} {\bf 9} (2000), 101-125. 

\bibitem{R-T-F} J. G. Ratcliffe and S. T. Tschantz, 
Fibered orbifolds and crystallographic groups, 
{\it Algebr. Geom. Topol.} {\bf 10} (2010), 1627-1664. 




\bibitem{S} L. Slavich, 
Some hyperbolic 4-manifolds with low volume and number of cusps, 
{\it Topology Appl.} {\bf 191} (2015), 1-9. 





\end{thebibliography}
\end{document}